\title{The Lefthanded Local Lemma characterizes chordal dependency graphs}
\author{Wesley Pegden\footnote{
Courant Institute of Mathematical Sciences, New York University,
251 Mercer St, Rm 921,
New York, NY 10012
Email: pegden@math.nyu.edu.  Partially supported by NSF MSPRF grant 1004696.}
}
\date{March 30, 2012}
\newcommand{\nin}{\notin}
\newcommand{\pp}{{\rm P}}
\newcommand{\sss}{{\cal S}}
\newcommand{\aaa}{{\cal A}}
\newcommand{\BB}{{\cal B}}
\newcommand{\LLL}{{\cal L}}
\newcommand{\empt}{\varnothing}
\newcommand{\dun}{\;\dot\cup\;}
\newcommand{\sbs}{\subset}
\newcommand{\abs}[1]{\lvert #1 \rvert}
\newcommand{\st}{\,\vrule\,}
\newcommand{\bfrac}[2]{\frac{\displaystyle #1}{\displaystyle #2}}
\newtheorem{theorem}{Theorem}[section]
\newtheorem{cor}[theorem]{Corollary}
\newtheorem{observ}[theorem]{Observation}
\newcommand{\comments}[1]{}
\theoremstyle{definition}
\newtheorem{definition}[theorem]{Definition}
\theoremstyle{remark}
\newtheorem{ex}{Example}
\newcommand{\stm}{\setminus}
\newcommand{\ep}{\varepsilon}
\newcommand{\VX}[2]{\cnode*(#1){2.5pt}{#2}}
\newcommand{\edg}[2]{\ncline{#1}{#2}}
\newcommand{\oedg}[2]{\ncline[linestyle=dashed, dash=2pt 1.5pt, arrows=->,arrowsize=4pt 4]{#1}{#2}}
\newcommand{\Cedg}[2]{\nccurve[arcangleA=50,arcangleB=50,ncurv=2]{#1}{#2}}
\begin{document}
\maketitle
\begin{abstract}
Shearer gave a general theorem characterizing the family $\LLL$ of dependency graphs labeled with probabilities $p_v$ which have the property that for any family of events with a dependency graph from $\LLL$ (whose vertex-labels are upper bounds on the probabilities of the events), there is a positive probability that none of the events from the family occur.

We show that, unlike the standard Lov\'asz Local Lemma---which is less powerful than Shearer's condition on every nonempty graph---a recently proved `Lefthanded' version of the Local Lemma is equivalent to Shearer's condition for all chordal graphs.  This also leads to a simple and efficient algorithm to check whether a given labeled chordal graph is in $\LLL$.
\end{abstract}

\section{Introduction}
Suppose we would like to show that, with positive probability, none of the events from a finite family $A_1,\dots, A_n$ occur.  If the events are independent, then the probability that none occur is $\prod(1-\pp(A_i))$ and this conclusion would follow from requiring simply that $\pp(A_i)<1$ for all $1\leq i\leq n$.  The Lov\'asz Local Lemma, proved by Erd\H{o}s and Lov\'asz in 1975\cite{EL}, allows the same kind conclusion when the independence condition is relaxed in exchange for stronger bounds on the $\pp(A_i).$  The relaxation of the independence condition is captured by a \emph{dependency graph}:
\begin{definition}
A graph $G$ with finite vertex set $V$ is a \emph{dependency graph} for a family $\{A_v\}_{v\in V}$ of events if each $A_v$ is independent of any family of events whose corresponding vertices are all nonadjacent to $v$ in $G$.
\end{definition}
The general version of the Lov\'asz Local Lemma can be stated as follows:
\begin{theorem}[Lov\'asz Local Lemma]
    Consider a family of events $\{A_v\}_{(v\in V)}$ with a dependency graph $G$ on $V$.
If there is an assignment of numbers $0\leq x_v\leq 1$ ($v\in V$) such that for any $v\in G$ 
  \begin{equation}
    \pp\left(A_v\right)\leq x_v\prod_{u\sim v}(1-x_u),\textrm{ then}
    \label{l.lllcond}
  \end{equation}
  \begin{equation}
    \pp\left(\bigcap_{A\in \aaa}\bar A\right)\geq \prod_{v\in G}(1-x_v),
  \end{equation}
and so in particular, if $x_v<1$ for all $v$ then we have that
  \label{t.lll}
  \begin{equation}
    \pp\left(\bigcap_{A\in \aaa}\bar A\right)>0.
    \label{l.conc}
  \end{equation}
\end{theorem}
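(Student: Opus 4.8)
The plan is to prove the customary conditional strengthening by induction and then assemble the final bound via the chain rule. The crux is the claim that for every vertex $v$ and every subset $S \subseteq V \setminus \{v\}$ one has
$$\pp\left(A_v \,\Big|\, \bigcap_{u \in S}\bar A_u\right) \le x_v.$$
I would prove this by induction on $|S|$. The base case $S = \empt$ is immediate: hypothesis \eqref{l.lllcond} gives $\pp(A_v) \le x_v \prod_{u \sim v}(1 - x_u) \le x_v$, since each factor $(1 - x_u)$ lies in $[0,1]$.

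For the inductive step I would split $S$ into $S_1 = \{u \in S : u \sim v\}$ and $S_2 = S \setminus S_1$, and expand the conditional probability as a ratio,
$$\pp\left(A_v \,\Big|\, \bigcap_{u \in S}\bar A_u\right) = \frac{\pp\left(A_v \cap \bigcap_{u \in S_1}\bar A_u \,\Big|\, \bigcap_{u \in S_2}\bar A_u\right)}{\pp\left(\bigcap_{u \in S_1}\bar A_u \,\Big|\, \bigcap_{u \in S_2}\bar A_u\right)}.$$
The numerator is at most $\pp(A_v \mid \bigcap_{u\in S_2}\bar A_u)$ by monotonicity, and this equals $\pp(A_v)$ by the dependency-graph property (every $u \in S_2$ is nonadjacent to $v$), hence is at most $x_v\prod_{u\sim v}(1-x_u)$. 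For the denominator I would enumerate $S_1 = \{u_1,\dots,u_k\}$, write the probability as a telescoping product of factors $\pp(\bar A_{u_i} \mid \bigcap_{j<i}\bar A_{u_j} \cap \bigcap_{u \in S_2}\bar A_u)$, and apply the inductive hypothesis to each (every conditioning set here has fewer than $|S|$ elements), obtaining the lower bound $\prod_{u \in S_1}(1-x_u) \ge \prod_{u\sim v}(1-x_u)$. Dividing yields exactly $x_v$.

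The main obstacle — and essentially the only delicate point — is this inductive step, specifically arranging the powers of $(1-x_u)$ so that they cancel: one must bound the numerator using only the full neighborhood of $v$ while lower-bounding the denominator by the neighbors actually present in $S_1$, and then use $S_1 \subseteq \{u : u\sim v\}$ together with $0 \le 1 - x_u \le 1$ to pass from $\prod_{u \in S_1}$ to $\prod_{u \sim v}$ in the correct direction (adding factors at most $1$ only decreases the product). Some care is also needed to keep the conditional probabilities well-defined; this can be dispatched by a standard degenerate-case argument, noting that the denominators stay positive whenever the relevant $x_u < 1$.

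Finally, to obtain the stated conclusion I would order the vertices $v_1,\dots,v_n$ arbitrarily and apply the chain rule,
$$\pp\left(\bigcap_{v \in V}\bar A_v\right) = \prod_{i=1}^{n}\pp\left(\bar A_{v_i} \,\Big|\, \bigcap_{j<i}\bar A_{v_j}\right) \ge \prod_{i=1}^{n}(1 - x_{v_i}),$$
bounding each factor below by $1 - x_{v_i}$ using the claim. This is precisely $\prod_{v\in G}(1-x_v)$, and the strict positivity \eqref{l.conc} follows immediately when every $x_v < 1$.
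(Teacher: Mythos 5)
Your proof is correct, and it is the standard inductive argument for the general Lov\'asz Local Lemma: establish $\pp\bigl(A_v \,|\, \bigcap_{u\in S}\bar A_u\bigr)\le x_v$ by strong induction on $\abs{S}$ (splitting $S$ into neighbors and non-neighbors of $v$, bounding the numerator via the dependency-graph property and the denominator via the inductive hypothesis), then finish with the chain rule; your handling of the cancellation direction and the degenerate case $x_u=1$ is the right one. Note that the paper itself gives no proof of this theorem --- it is quoted as the classical result of Erd\H{o}s and Lov\'asz --- so there is nothing to compare against beyond observing that your argument is the canonical one.
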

Not only does the Local Lemma allow one to conclude that $\pp\left(\bigcap_{A\in \aaa}\bar A\right)>0$ without having full-blown independence of the $A_i$, it does so with only `local' conditions; that is, each instance of condition (\ref{l.lllcond}) concerns only a single vertex and its neighborhood.  For example, if $G$ has maximum degree $\Delta$, then making the assignment $x_v=\frac{1}{\Delta+1}$ for all $v$ gives the conclusion of the Lemma under the condition that for all $v$ we have $\pp(A_v)\leq \frac{\Delta^\Delta}{(\Delta+1)^{\Delta+1}}$, and for this condition to be satisfied it is enough to have $\pp(A_v)\leq \frac{1}{e\Delta}$.  The fact that the Lemma can be applied with only local conditions on the dependency graph means that it can be applied without detailed knowledge of the structure of the dependency graph: knowledge about the `local size' is sufficient.  This has allowed the Local Lemma to become a central tool in probabilistic combinatorics, used to prove the existence of combinatorial objects with wide ranges of properties.

We define $\LLL$ as the family of graphs $G$ with vertices labeled with real numbers $0\leq p_v\leq 1$ with the property that for any family of events $A_v$ having $G$ as a dependency graph and for which $\pp(A_v)\leq p_v$ for all $v\in V(G)$, the family of events satisfies $\pp(\bigcap \bar A_v)>0$.  Separate from the question of when we can fruitfully apply the Local Lemma to a combinatorial problem, there is a natural theoretical question regarding which labeled graphs are in $\LLL$.  Since the Local Lemma only uses `local' conditions on the graph, it is not surprising that there should be some labeled graphs in $\LLL$ to which the Local Lemma doesn't apply.  For example, if $K_2^p$ is a graph with 2 vertices joined by an edge, both labeled with the same probability $p$, then the conditions of the Local Lemma apply exactly if $p\leq \frac 1 4$, even though $K_2^p\in \LLL$ for all $p<\frac 1 2$.

In \cite{shear}, Shearer gave a complete characterization of the family $\LLL$:
\begin{theorem}[Shearer]
\label{t.S}
  Let $G$ be graph labeled with numbers $0\leq p_v\leq 1$ ($v\in G$).  For $S\sbs V(G)$ let
\begin{equation}
\sss(S):=\sum_{\substack{I\supset S\\I\textrm{ indep.}}}(-1)^{\abs{I}-\abs{S}}\prod_{v\in I}p_v.
\label{l.sdef}
\end{equation}
If $\sss(S)\geq 0$ for all $S$, then 
\[
\pp\left(\bigcap_{v\in G}\bar A\right)\geq \sss(\empt)
\]
for any set of events $A_v$ with $\pp(A_v)\leq p_v$ for all $v$ and for which $G$ is a dependency graph (with the vertex $v$ corresponding to the event $A_v$).  Furthermore, this bound is best possible, and if there is any $S'$ with $\sss(S')<0$, then there is a family of events $A_v$ with $\pp(A_v)\leq p_v$ for all $v$ and for which $G$ is a dependency graph such that $\pp\left(\bigcap\limits_{v\in G}\bar A\right)=0$.
\end{theorem}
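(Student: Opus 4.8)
The plan is to split the theorem into the lower bound (sufficiency) and the tightness/necessity, organizing both around the \emph{independence polynomial} $Z(H):=\sum_{I\text{ indep in }H}(-1)^{\abs I}\prod_{v\in I}p_v$, so that $\sss(\empt)=Z(G)$ and, more generally, $\sss(S)=\big(\prod_{v\in S}p_v\big)\,Z(G-N[S])$ for independent $S$ while $\sss(S)=0$ otherwise (here $N[S]$ is the closed neighborhood and $G-W$ denotes vertex deletion). The workhorse is the deletion recurrence $Z(G)=Z(G-v)-p_vZ(G-N[v])$, got by splitting independent sets according to whether they contain $v$. First I would record its probabilistic shadow: writing $P(W):=\pp(\bigcap_{u\in W}\bar A_u)$, the containment of events together with the independence of $A_v$ from $\{A_u:u\nin N[v]\}$ gives $P(V)\ge P(V\sm v)-p_vP(V\sm N[v])$, which has exactly the shape of the $Z$-recurrence.

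For the lower bound I would induct on $\abs{V}$ to prove $P(V)\ge Z(G)$, and the subtlety here is what I expect to be the main obstacle. The recurrence mixes the two deleted graphs with opposite-signed contributions: closing the induction needs a \emph{lower} bound $P(V\sm v)\ge Z(G-v)$ (which induction supplies) but simultaneously \emph{upper} control of $P(V\sm N[v])$, whereas the inductive hypothesis only ever produces lower bounds. Indeed, feeding the naive bounds $P(V\sm v)\ge Z(G-v)$ and $P(V\sm N[v])\ge Z(G-N[v])$ into the recurrence pushes the $-p_vZ(G-N[v])$ term the wrong way. The fix is to strengthen the inductive statement to a monotonicity assertion: routing through conditional probabilities, $P(V)=P(V\sm v)\cdot\pp(\bar A_v\mid\bigcap_{u\ne v}\bar A_u)$, so it suffices to show $\pp(A_v\mid\bigcap_{u\ne v}\bar A_u)\le p_v\,Z(G-N[v])/Z(G-v)$. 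Bounding the numerator by independence reduces this to the ratio inequality $P(V\sm N[v])/P(V\sm v)\le Z(G-N[v])/Z(G-v)$, i.e. to the monotonicity of $W\mapsto P(W)/Z(W)$ under adding vertices. I would therefore carry along, as the actual induction hypothesis, that $Z$ of every relevant induced subgraph is positive and that these $P/Z$ ratios are suitably ordered; the positivity of all $\sss(S)$ is exactly what keeps every denominator positive and the monotonicity self-consistent.

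For tightness I would exhibit the canonical extremal family and show it realizes $\sss(\empt)$. Declare a weight on occurrence patterns by $\mu(I):=\sss(I)$ for independent $I$ (and $0$ otherwise); Möbius inversion of $\sum_{J\supset I}\mu(J)=\prod_{v\in I}p_v$ is precisely the defining formula (\ref{l.sdef}), so this assignment forces $\pp(\bigcap_{v\in J}A_v)=\prod_{v\in J}p_v$ on every independent $J$ — making non-adjacent events jointly independent with the right marginals (this is where one checks $G$ is a dependency graph, via the product form on all independent subsets) — while adjacent events become mutually exclusive. A short Möbius computation gives $\sum_I\mu(I)=1$, and under the hypothesis $\sss(S)\ge 0$ all weights are nonnegative, so $\mu$ is a genuine probability measure; then $\pp(\bigcap\bar A)=\mu(\empt)=\sss(\empt)$, matching the bound and proving it best possible.

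Finally, for necessity, suppose some $\sss(S')<0$. Now $\mu$ fails to be a measure, and I would instead construct honest events that cover the probability space. The model case is a single edge with $p>\tfrac12$: on $[0,1]$ take $A_1=[0,p]$, $A_2=[1-p,1]$, which overlap, keep $G$ a dependency graph, and leave $\pp(\bar A_1\cap\bar A_2)=0$. In general I would build adjacent events to overlap (and non-adjacent ones to stay independent) aggressively enough that $\pp(\bigcup A_v)=1$, using the negativity of $\sss(S')$ — the precise failure of the nonnegativity that $\mu$ would need — to drive the construction past the point where the complements can retain common measure, followed by a limiting/rescaling argument to land exactly at $\pp(\bigcap\bar A)=0$. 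I expect this construction to require care but to be guided closely by the explicit measure $\mu=\sss$ and the covering picture, so that the genuine difficulty of the theorem sits in the strengthened induction of the second paragraph.
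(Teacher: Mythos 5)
First, a point of orientation: the paper gives no proof of Theorem~\ref{t.S} --- it is quoted from Shearer~\cite{shear} (see also Scott and Sokal~\cite{gas}) --- so your proposal can only be compared against that standard proof. Your first two parts reconstruct it correctly. The deletion recurrence $Z(G)=Z(G-v)-p_vZ(G-N[v])$, its probabilistic shadow, and the strengthened induction hypothesis in ratio form $P(W)/P(W\sm w)\geq Z(G[W])/Z(G[W\sm w])$ are exactly the engine of Shearer's argument, and your diagnosis of why the naive induction fails is the right one. One step you gesture at but should make explicit: the hypothesis $\sss(S)\geq 0$ speaks directly only about subgraphs of the form $G-N[S]$ with $S$ independent, whereas your induction needs $Z(G[W])>0$ for \emph{arbitrary} $W\sbs V$. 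This follows from the identity $Z(G[W])=\sum_{R\sbs V\sm W}\sss(R)\geq \sss(\empt)$ (recorded in this paper as line~(\ref{l.lassum})): if $\sss(\empt)>0$ every denominator you need is strictly positive, and if $\sss(\empt)=0$ the claimed bound is trivial. Your M\"obius-measure construction for tightness (the measure $\mu=\sss$ on independent sets, with $A_v=\{I : v\in I\}$) is likewise the standard extremal family, and your plan for verifying the dependency-graph property --- product form on independent sets, probability zero on dependent ones --- is sound.

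The genuine gap is the final claim, that some $\sss(S')<0$ forces a family with $\pp\bigl(\bigcap\bar A_v\bigr)=0$. What you propose --- building adjacent events to overlap ``aggressively enough'' that the union covers the space while non-adjacent events stay independent --- is a heuristic, not an argument, and I do not see how to carry it out as stated: the dependency-graph constraint severely limits how overlaps can be enlarged, and negativity of a single $\sss(S')$ gives no direct handle on such a construction. The argument that works instead reuses your extremal measure at a rescaled parameter. Writing $\sss_q$ for the sums~(\ref{l.sdef}) computed with labels $q$, let $t^*=\max\{t\in[0,1] : \sss_{tp}(S)\geq 0 \text{ for all } S\}$; this maximum exists because the constraint set is cut out by finitely many closed polynomial inequalities and contains $t=0$, and $t^*<1$ since the condition fails at $p$. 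Put $\tilde p=t^*p$. If $\sss_{\tilde p}(\empt)$ were positive, the identity above would give $Z_{\tilde p}(G[W])\geq \sss_{\tilde p}(\empt)>0$ for every $W$; these finitely many strict inequalities persist in a neighborhood of $\tilde p$, and since $\sss_q(S)=\bigl(\prod_{v\in S}q_v\bigr)Z_q\bigl(G[V\sm N[S]]\bigr)$ for independent $S$ (and $\sss_q(S)=0$ otherwise), Shearer's condition would hold on that whole neighborhood --- contradicting the maximality of $t^*$. Hence $\sss_{\tilde p}(\empt)=0$, and your M\"obius measure at $\tilde p$ --- legitimate because $\sss_{\tilde p}\geq 0$ --- yields events with dependency graph $G$, $\pp(A_v)=t^*p_v\leq p_v$, and $\pp\bigl(\bigcap\bar A_v\bigr)=\sss_{\tilde p}(\empt)=0$. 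This boundary lemma is precisely the piece your ``limiting/rescaling argument'' needs and does not supply.
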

\begin{ex}
  If $K_n^p$ is the complete graph in which every vertex gets the same label $p\in [0,1]$, then the conclusion (\ref{l.conc}) of the Local Lemma applies to any family of events with $K_n^p$ as their dependency graph if and only if $p<\frac{1}{n}$.  Accordingly, we see that for $K_n^p$, the only time the sum (\ref{l.sdef}) may be negative is when $S=\varnothing$, and that we have $\sss(\varnothing)>0$ if and only if $1-np>0$, and so whenever $p<\frac 1 n$.
\end{ex}

In spite of its theoretical importance, Shearer's condition is typically computationally intractable in practice.  (Apart from likely difficulties in finding independent sets in the graph, note that the sums (\ref{l.sdef}) may contain an exponential number of terms.)  In fact, it remains unclear whether or not the problem of deciding whether a given labeled graph is in $\LLL$ is in NP.  Nevertheless, we will see that for a restricted class of dependency graphs (which arises combinatorially in the consideration of problems on sequences, for example) determining membership in $\LLL$ is `easy', and, surprisingly, can be done with only local conditions on the dependency graph.  First we need some definitions.
\begin{definition}
A \emph{tree order} is a partial order in which $w\lneq u,v$ implies that $u$ and $v$ are comparable. 
\end{definition}
\noindent (In particular, a linear order is a tree order.)
\begin{definition}
\label{d.left}
A graph is a \emph{lefthanded graph} with respect to a tree-order $\leq$ if
\begin{enumerate}
\item $u\sim v$ implies that $u\leq v$ or $v\leq u$, and \label{c.adjwintree}
\item $(w\lneq u\lneq v)$ and $(v\sim w)$ together imply $(v\sim u)$. \label{c.lordc}
\end{enumerate}
\end{definition}

\smallskip
A \emph{subtree graph} is a graph whose vertex-set is a set of subtrees of some fixed tree, where adjacency corresponds to intersection.  The following simple observation gives us an `order-free' view of the lefthanded graphs:
\begin{observ}
  A graph is lefthanded with respect to some tree-order if and only if it is isomorphic to a subtree graph.
\label{o.leftissub}
\end{observ}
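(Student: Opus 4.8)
The statement is an equivalence, so the plan is to prove the two implications by explicit constructions, passing between a tree order on the vertex set and a host tree together with a subtree assignment.

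First I would handle the direction that a subtree graph is lefthanded. Root the host tree $T$ at an arbitrary vertex $r$, obtaining the ancestor order $\leq_T$ on $V(T)$, in which the ancestors of any fixed vertex form a chain. Each of the subtrees $S_v$ is connected, hence has a unique $\leq_T$-minimal vertex $t_v$, its ``top''. I would then define a tree order on the index set by declaring $u\leq v$ exactly when $t_v\leq_T t_u$ (so that higher tops are larger), breaking ties among equal tops by a fixed linear order. The tree-order property holds because $\{z:z>w\}$ consists of the indices whose tops are ancestors of $t_w$, and ancestors in a tree form a chain. Condition~\ref{c.adjwintree} holds because a common point $x\in S_u\cap S_v$ satisfies $t_u,t_v\leq_T x$, forcing $t_u,t_v$ (hence $u,v$) comparable. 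For Condition~\ref{c.lordc}, given $w<u<v$ with $v\sim w$ the tops satisfy $t_v<_T t_u<_T t_w$, and a common point $x$ of $S_v,S_w$ satisfies $t_w\leq_T x$, so $t_u$ lies on the tree-path from $t_v$ to $x$; since $S_v$ is connected and contains both $t_v$ and $x$, it contains this path, whence $t_u\in S_v\cap S_u$ and $v\sim u$.

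For the converse, suppose $G$ is lefthanded with respect to a tree order $\leq$. Because every strict up-set $\{z:z>v\}$ is a finite chain, each non-maximal $v$ has a unique cover $p(v)$ (the minimum of its up-set); joining each $v$ to $p(v)$ and every maximal element to a newly added root $\rho$ produces a tree $T$ on $V\cup\{\rho\}$ whose ancestor relation is exactly $\geq$. I would assign to $v$ the set $S_v=\{v\}\cup\{w:w\sim v,\ w<v\}$. This is connected in $T$: for a down-neighbor $w$, the path from $w$ up to its ancestor $v$ consists of the $u$ with $w<u<v$, each adjacent to $v$ by Condition~\ref{c.lordc} and hence lying in $S_v$.

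It then remains to check that intersection matches adjacency. If $u\sim v$ then Condition~\ref{c.adjwintree} makes them comparable, say $u<v$, so $u\in S_v\cap S_u$. Conversely, a common element $z\in S_u\cap S_v$ either equals $u$ or $v$ (giving adjacency at once) or is a common strict down-neighbor with $z<u$, $z<v$, $z\sim u$, $z\sim v$; the tree-order property then forces $u,v$ comparable, say $u<v$, and Condition~\ref{c.lordc} applied to $z<u<v$ with $z\sim v$ yields $v\sim u$. I expect this final case to be the main obstacle, since it is the only point where both defining properties of a lefthanded graph must be invoked together, and since the right choice in the forward direction (ordering subtrees by their rooted tops, with ancestors declared \emph{larger}) is exactly what makes Condition~\ref{c.lordc} fall out of the connectivity of the subtrees.
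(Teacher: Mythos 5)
Your proof is correct and takes essentially the same approach as the paper: in the forward direction you order subtrees by how close their root-most vertex lies to a fixed root (the paper does the same thing via shortest paths to a fixed leaf, with the same arbitrary tie-breaking), and in the converse direction you build the tree of cover relations of $\leq$ (the paper's adjacency rule $v_1\in\mu(D_{v_2})$ is exactly this cover relation) and assign to $v$ the subtree $\{v\}\cup\{w \sim v : w< v\}$, which is precisely the paper's $\tau_v$. If anything your write-up is more careful than the paper's sketch---for instance, adding the extra root $\rho$ so that the cover relations genuinely form a tree rather than a forest when $\leq$ has several maximal elements.
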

\noindent A proof is given at the beginning of Section \ref{s.proof}.  In light of Observation \ref{o.leftissub}, and recalling that a graph is \emph{chordal} if it has no cycles of length $>3$ as induced subgraphs, the following theorem of Gavril nicely characterizes the lefthanded graphs.
\begin{theorem}[Gavril (1974)]
A graph is a subtree graph if and only if it is chordal.  Moreover, a subtree graph isomorphic to a given chordal graph can be found in polynomial time.
\label{t.G}
\end{theorem}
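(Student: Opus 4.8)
The plan is to prove both directions of the equivalence, starting with the easy implication that every subtree graph is chordal, then constructing a subtree representation for an arbitrary chordal graph by induction on the number of vertices, and finally observing that this construction is algorithmic and runs in polynomial time.

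For the forward direction (subtree graph $\Rightarrow$ chordal), I would argue by contradiction using the acyclicity of the host tree. Suppose $G$ is the intersection graph of a family of subtrees $\{T_v\}$ of a tree $\mathcal T$, and that $v_1 v_2 \cdots v_k$ is an induced cycle with $k\geq 4$. Since $v_1\not\sim v_3$, the subtrees $T_{v_1}$ and $T_{v_3}$ are disjoint, so some edge $e$ of $\mathcal T$ separates them, splitting $\mathcal T$ into a part $X\supseteq T_{v_1}$ and a part $Y\supseteq T_{v_3}$. The subtree $T_{v_2}$ meets both $T_{v_1}$ and $T_{v_3}$, so being connected it must contain both endpoints of $e$. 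Every remaining subtree $T_{v_j}$ (for $4\leq j\leq k$) is disjoint from $T_{v_2}$ and hence avoids $e$ altogether, so it lies wholly in $X$ or wholly in $Y$; following the chain $v_3\sim v_4\sim\cdots\sim v_k$ then forces all of these subtrees into $Y$, contradicting $v_k\sim v_1$, which would require $T_{v_k}$ to meet $T_{v_1}\subseteq X$. This step is routine once the separating-edge idea is in place.

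The substantive direction is chordal $\Rightarrow$ subtree graph, and here the main work is the inductive construction. I would use the classical fact (Dirac) that every chordal graph has a \emph{simplicial} vertex $v$, i.e.\ one whose neighborhood $N(v)$ induces a clique, together with the fact that $G-v$ is again chordal. By induction $G-v$ has a subtree representation in some tree $\mathcal T$. The key point is that $N(v)$ is a clique, so the subtrees $\{T_u : u\in N(v)\}$ pairwise intersect; invoking the Helly property for subtrees of a tree, they share a common node $x$. I then attach a new leaf $x'$ to $x$, grow each $T_u$ with $u\in N(v)$ to include $x'$, and set $T_v=\{x'\}$. Because $x'$ is a fresh leaf reachable only through $x$, the only new intersections created are among the already-pairwise-intersecting clique $N(v)$ and between $T_v$ and exactly those $T_u$ with $u\in N(v)$, so the enlarged family represents $G$. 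The two ingredients I would need to establish (or cite) are the existence of a simplicial vertex in a chordal graph and the Helly property for subtrees of a tree; the former is the more delicate and is the main obstacle, typically proved via the fact that minimal separators in a chordal graph are cliques.

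Finally, for the polynomial-time claim, I would note that the inductive construction is itself an algorithm. Repeatedly locating a simplicial vertex can be done by directly testing, for each vertex, whether its neighborhood induces a clique (polynomial per step), which produces a perfect elimination ordering of $G$; reversing this ordering and performing the leaf-attachment step above at each stage builds the full representation in polynomially many polynomial-time steps. (A linear-time simplicial ordering is in fact available via maximum cardinality search, but the naive bound already suffices for the stated claim.)
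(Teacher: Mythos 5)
The paper does not prove this statement at all: Gavril's theorem is quoted as a classical result (it is then used, via Corollary \ref{c.G}, to obtain tree-orders for chordal graphs), so there is no in-paper argument to compare yours against. Judged on its own, your proof is the standard textbook argument and it is correct. The forward direction via a separating edge is sound: since $T_{v_1}$ and $T_{v_3}$ are disjoint connected subgraphs of a tree, an edge $e$ on the path between them splits the host tree into $X \supseteq T_{v_1}$ and $Y \supseteq T_{v_3}$; $T_{v_2}$ must then contain both endpoints of $e$, every $T_{v_j}$ with $4 \leq j \leq k$ avoids $e$ entirely, and propagating along $v_3 \sim v_4 \sim \cdots \sim v_k$ traps $T_{v_k}$ in $Y$, contradicting $v_k \sim v_1$. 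The reverse direction by induction (simplicial vertex $v$, Helly point $x$ of the clique neighborhood's subtrees, new leaf $x'$ with $T_v = \{x'\}$) is also correct, and the leaf-attachment step visibly creates exactly the intersections required. Two small remarks: first, you lean on two classical lemmas---Dirac's existence of a simplicial vertex and the Helly property for subtrees of a tree---which you cite rather than prove; since you name them precisely and both are standard, this is acceptable, though a fully self-contained write-up would include their (short) proofs. Second, the case $N(v) = \varnothing$ (an isolated vertex) needs the trivial adjustment of attaching $x'$ to an arbitrary node, and the induction needs a one-vertex base case; neither causes any difficulty. The polynomial-time claim also goes through as you say, since the host tree never exceeds $n$ nodes, so finding the Helly point by exhaustive checking at each stage is polynomial.
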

\noindent The second part of Theorem \ref{t.G} implies that we can efficiently find a suitable tree-order for any chordal graph:
\begin{cor}
For any chordal graph $G$, a tree order $\leq$ such that $(G,\leq)$ is a lefthanded graph can be found in polynomial time.
\label{c.G}
\end{cor}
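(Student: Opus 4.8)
The plan is to read this corollary as the effective, or algorithmic, synthesis of Gavril's Theorem~\ref{t.G} and Observation~\ref{o.leftissub}. Given a chordal graph $G$, the second part of Theorem~\ref{t.G} produces, in polynomial time, a tree $T$ together with a family of subtrees $\{T_v\}_{v\in V(G)}$ realizing $G$ as a subtree graph (so $u\sim v$ in $G$ exactly when $T_u\cap T_v\neq\varnothing$). Observation~\ref{o.leftissub} already guarantees that such a representation corresponds to a tree-order making $G$ lefthanded; what remains is to check that the direction ``subtree graph $\Rightarrow$ lefthanded'' of that observation can be carried out efficiently. So the whole content reduces to giving a polynomial-time procedure that turns a subtree representation into a tree-order $\leq$ for which $(G,\leq)$ satisfies conditions \ref{c.adjwintree} and \ref{c.lordc} of Definition~\ref{d.left}.

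For the construction I would first root $T$ at an arbitrary vertex and measure depth as distance from this root. Each subtree $T_v$, being connected, has a unique vertex $r_v$ of minimum depth, and $r_v$ is an ancestor in $T$ of every vertex of $T_v$; both facts, and the vertices $r_v$ themselves, are computable in linear time once $T$ is rooted. I would then declare $u\leq v$ exactly when $r_v$ is an ancestor of $r_u$, breaking the ties that occur when $r_u=r_v$ by any fixed linear order on $V(G)$. Computing this order is clearly polynomial, so the only thing left to justify is that it has the desired structural properties.

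The verifications rest on one elementary fact: if $T_u\cap T_v\neq\varnothing$ then any point $x$ of the intersection has both $r_u$ and $r_v$ as ancestors, so $r_u$ and $r_v$ are comparable in the ancestor relation. This immediately gives condition~\ref{c.adjwintree} (adjacency forces comparability) and the tree-order axiom (if $r_u$ and $r_v$ are both ancestors of $r_w$ they are comparable). The heart of the argument, and the step I expect to be the main obstacle, is condition~\ref{c.lordc}: assuming $w\lneq u\lneq v$ and $v\sim w$, I must produce a point of $T_u\cap T_v$. The idea is that $v\sim w$ gives some $x\in T_v\cap T_w$, and since $r_v$ is an ancestor of $r_u$, which is an ancestor of $r_w$, which is an ancestor of $x$, the vertex $r_u$ lies on the path in $T$ from $x$ up to $r_v$; this path lies entirely in the connected subtree $T_v$, whence $r_u\in T_v$, and of course $r_u\in T_u$, so $u\sim v$ as required. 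The genuinely fiddly part is handling the tie-broken cases, when some of the $r_v$ coincide, and confirming that in each case the relevant vertex still witnesses the intersection, so that condition~\ref{c.lordc} holds with no exceptions.
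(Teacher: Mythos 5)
Your proposal is correct and is essentially the paper's own argument: the paper also derives the corollary by combining the algorithmic half of Gavril's Theorem~\ref{t.G} with the constructive ``subtree graph $\Rightarrow$ lefthanded'' direction of Observation~\ref{o.leftissub}, where it orders subtrees by whether the path $P_i$ from $\tau_i$ to a fixed leaf $x_0$ meets $\tau_j$ (ties broken arbitrarily)---which is exactly your ancestor order on the minimum-depth representatives, since $P_i$ meets $\tau_j$ precisely when $r_j$ is a weak ancestor of $r_i$. You additionally write out the verification of the tree-order axiom and of conditions~\ref{c.adjwintree} and~\ref{c.lordc}, which the paper leaves as ``not hard to check.''
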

\noindent Figure \ref{f.GH} shows the Goldner--Harary graph (famous for being maximally planar yet non-Hamiltonian) which is chordal, together with the tree-order which realizes it as a lefthanded graph.   (The relationship between subtree graphs and lefthanded graphs giving rise to the corollary is made explicit in the proof of Observation \ref{o.leftissub} given at the beginning of Section \ref{s.proof}.)

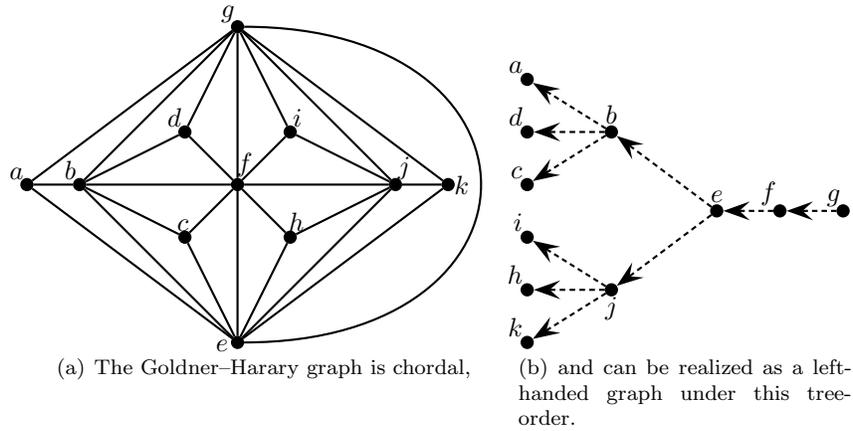
\begin{figure}
\begin{center}
\psset{unit=.7cm}
\subfigure[The Goldner--Harary graph is chordal,]{
\begin{pspicture}(1,1)(10,7)
\VX{1,4}{Va}
\VX{2,4}{Vb}
\VX{4,3}{Vc}
\VX{4,5}{Vd}
\VX{5,1}{Ve}
\VX{5,4}{Vf}
\VX{5,7}{Vg}
\VX{6,3}{Vh}
\VX{6,5}{Vi}
\VX{8,4}{Vj}
\VX{9,4}{Vk}

\uput{.12}[130](Va){$a$}
\uput{.12}[130](Vb){$b$}
\uput{.12}[100](Vc){$c$}
\uput{.12}[130](Vd){$d$}
\uput{.18}[190](Ve){$e$}
\uput{.15}[65](Vf){$f$}
\uput{.10}[130](Vg){$g$}
\uput{.12}[65](Vh){$h$}
\uput{.12}[55](Vi){$i$}
\uput{.12}[55](Vj){$j$}
\uput{.12}[0](Vk){$k$}

\edg{Va}{Vb}
\edg{Va}{Ve}
\edg{Va}{Vg}

\edg{Vb}{Vc}
\edg{Vb}{Vd}
\edg{Vb}{Ve}
\edg{Vb}{Vf}
\edg{Vb}{Vg}

\edg{Vd}{Vg}
\edg{Vd}{Vf}

\edg{Vc}{Ve}
\edg{Vc}{Vf}

\edg{Ve}{Vf}
\edg{Vf}{Vg}

\edg{Vk}{Vj}
\edg{Vk}{Ve}
\edg{Vk}{Vg}

\edg{Vj}{Vh}
\edg{Vj}{Vi}
\edg{Vj}{Ve}
\edg{Vj}{Vf}
\edg{Vj}{Vg}

\edg{Vi}{Vg}
\edg{Vi}{Vf}

\edg{Vh}{Ve}
\edg{Vh}{Vf}

\Cedg{Ve}{Vg}
\end{pspicture}
}
\subfigure[and can be realized as a lefthanded graph under this tree-order.]{
\begin{pspicture}(1,1)(7,6)
\VX{1,1}{Vk}
\VX{1,2}{Vh}
\VX{1,3}{Vi}
\VX{1,4}{Vc}
\VX{1,5}{Vd}
\VX{1,6}{Va}

\VX{2.6,2}{Vj}
\VX{2.6,5}{Vb}

\VX{4.6,3.5}{Ve}
\VX{5.8,3.5}{Vf}
\VX{7,3.5}{Vg}

\uput{.15}[130](Va){$a$}
\uput{.16}[90](Vb){$b$}
\uput{.16}[130](Vc){$c$}
\uput{.15}[130](Vd){$d$}
\uput{.16}[90](Ve){$e$}
\uput{.13}[130](Vf){$f$}
\uput{.10}[130](Vg){$g$}
\uput{.15}[130](Vh){$h$}
\uput{.15}[130](Vi){$i$}
\uput{.14}[-90](Vj){$j$}
\uput{.15}[130](Vk){$k$}

\oedg{Vg}{Vf}
\oedg{Vf}{Ve}

\oedg{Ve}{Vb}
\oedg{Vb}{Va}
\oedg{Vb}{Vd}
\oedg{Vb}{Vc}

\oedg{Ve}{Vj}
\oedg{Vj}{Vk}
\oedg{Vj}{Vh}
\oedg{Vj}{Vi}

\end{pspicture}
}
\end{center}

\vspace{-.5cm}
  \caption{By Gavril, every chordal graph is a lefthanded graph with respect to some (not necessarily unique) tree-order.\label{f.GH}}
\end{figure}
\bigskip

In \cite{leftseq}, we proved a `Lefthanded Local Lemma', for the purpose of proving the existence of winning strategies in sequence games.  Roughly speaking, it allows one to ignore dependencies `to the right' for lefthanded dependency graphs.  For undirected graphs, the Lefthanded Local Lemma can be stated as follows:
\begin{theorem}[Lefthanded Local Lemma]
  Consider a family of events $\{A_v\}_{(v\in V)}$ with a lefthanded dependency graph ($G$,$\leq$) on $V$.
If there is an assignment of numbers $0\leq x_v\leq 1$ ($v\in V$) such that for any $v\in G$ 
  \begin{equation}
    \pp\left(A_v\right)\leq x_v\prod_{\substack{u\sim v\\ u\leq v}}(1-x_u).
    \label{l.cond}
  \end{equation}
Then we have
  \begin{equation}
    \pp\left(\bigcap_{A\in \aaa}\bar A\right)\geq \prod_{v\in G}(1-x_v)
\label{l.lconc}
  \end{equation}
  \label{t.ol}
\end{theorem}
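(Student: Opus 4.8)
The plan is to follow the classical inductive proof of the Local Lemma, but with a \emph{one-sided} inductive claim that the tree-order lets us sustain. Write $N^-(v)=\{u\sim v: u\leq v\}$ and $N^+(v)=\{u\sim v: u> v\}$ for the down- and up-neighborhoods of $v$, and abbreviate $\bar A_S:=\bigcap_{w\in S}\bar A_w$. The heart of the argument is the following claim: for every vertex $v$ and every $S\subseteq V\sm\{v\}$ containing no up-neighbor of $v$, i.e.\ with $S\cap N^+(v)=\empt$, one has
\[
\pp\left(A_v \mid \bar A_S\right)\leq x_v.
\]
I would prove this by induction on $\abs S$, the base case $S=\empt$ being exactly hypothesis (\ref{l.cond}) together with $\prod(1-x_u)\leq 1$.

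For the inductive step I would split $S=S_1\dun S_2$, where $S_1=S\cap N(v)$ are the neighbors of $v$ in $S$ and $S_2$ the non-neighbors; since $S$ avoids up-neighbors, $S_1\subseteq N^-(v)$. Writing the conditional probability as the ratio
\[
\pp\left(A_v\mid \bar A_S\right)=\frac{\pp\left(A_v\cap\bar A_{S_1}\mid \bar A_{S_2}\right)}{\pp\left(\bar A_{S_1}\mid \bar A_{S_2}\right)},
\]
I would bound the numerator by $\pp(A_v\mid \bar A_{S_2})=\pp(A_v)$, using that $A_v$ is independent of the non-neighbors $S_2$, and then by $x_v\prod_{u\in N^-(v)}(1-x_u)\leq x_v\prod_{u\in S_1}(1-x_u)$ via (\ref{l.cond}) and $S_1\subseteq N^-(v)$. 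The denominator I would expand as a telescoping product over $S_1=\{u_1,\dots,u_k\}$ and bound each factor below by $1-x_{u_m}$ by applying the inductive hypothesis to $u_m$ with the strictly smaller conditioning set $T_m=\{u_1,\dots,u_{m-1}\}\cup S_2$. The two bounds cancel to give $x_v$.

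The step I expect to be the main obstacle---and the only place the lefthanded hypotheses enter---is checking that the inductive hypothesis is genuinely applicable at each stage, i.e.\ that each $T_m$ contains no up-neighbor of $u_m$. For the part $\{u_1,\dots,u_{m-1}\}\subseteq S_1$ this is arranged by \emph{ordering $S_1$ by a linear extension of the tree-order, smaller elements first}, so that no earlier $u_l$ satisfies $u_l> u_m$. For the part $S_2$ I would argue by contradiction: suppose some $w\in S_2$ were an up-neighbor of $u_m$, so $u_m<w$ and $u_m\sim w$. Since $u_m\in N^-(v)$ we also have $u_m<v$, so the tree-order property forces $w$ and $v$ to be comparable. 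If $w<v$ then $u_m<w<v$ with $v\sim u_m$, and (\ref{c.lordc}) yields $v\sim w$; if $w>v$ then $u_m<v<w$ with $w\sim u_m$, and (\ref{c.lordc}) again yields $w\sim v$. Either way $w$ is a neighbor of $v$, contradicting $w\in S_2$ (and, in the second case, $S\cap N^+(v)=\empt$). Hence $T_m\cap N^+(u_m)=\empt$, and the induction closes.

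Finally I would deduce the theorem by telescoping along a global linear extension $v_1,\dots,v_n$ of the tree-order (smaller first):
\[
\pp\left(\bigcap_{v}\bar A_v\right)=\prod_{i=1}^{n}\pp\left(\bar A_{v_i}\mid \bar A_{\{v_1,\dots,v_{i-1}\}}\right)\geq\prod_{i=1}^{n}(1-x_{v_i}),
\]
where the inequality applies the claim to each $v_i$: the conditioning set $\{v_1,\dots,v_{i-1}\}$ consists only of elements $\leq v_i$ or incomparable to $v_i$, hence contains no up-neighbor of $v_i$. The only remaining bookkeeping is the usual one: if some $x_v=1$ the asserted bound is $0$ and there is nothing to prove, so one may assume all $x_v<1$, which keeps every event $\bar A_S$ of positive probability and all the conditional probabilities above well-defined.
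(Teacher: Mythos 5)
Your proposal is correct, but there is nothing in this paper to compare it against: the paper never proves Theorem \ref{t.ol}, citing it instead from \cite{leftseq} (where, as the paper remarks, a strictly more general version is established); the paper's own contribution is the converse-type statement, Theorem \ref{t.lisbest}. Judged on its own merits, your argument is a sound one-sided adaptation of the classical inductive proof of the Lov\'asz Local Lemma. The inductive invariant $\pp(A_v\mid\bar A_S)\leq x_v$ for conditioning sets with $S\cap N^+(v)=\empt$ is the right one, and the crux---that the invariant is preserved for the sets $T_m=\{u_1,\dots,u_{m-1}\}\cup S_2$---is exactly where the lefthanded axioms must enter; your verification is correct: ordering $S_1$ by a linear extension of $\leq$ disposes of the part $\{u_1,\dots,u_{m-1}\}$, while for $w\in S_2$ the tree-order axiom applied to $u_m\lneq v$ and $u_m\lneq w$ makes $v$ and $w$ comparable, and condition (\ref{c.lordc}) of Definition \ref{d.left} then yields $v\sim w$ in either case, contradicting $w\notin N(v)$ when $w\lneq v$ and contradicting $S\cap N^+(v)=\empt$ when $v\lneq w$. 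Two pieces of bookkeeping should be written out in a full version. First, the positivity $\pp(\bar A_S)>0$, needed for the conditional probabilities to be defined, is itself a consequence of the inductive claim (telescope $\pp(\bar A_S)$ along a linear extension and apply the claim to smaller sets), so it must be carried along as part of the induction rather than deduced from ``all $x_v<1$'' alone; this is the usual simultaneous induction and poses no difficulty. Second, the decomposition $N(v)=N^-(v)\dun N^+(v)$, which you use to conclude $S_1\sbs N^-(v)$, silently invokes condition (\ref{c.adjwintree}) of Definition \ref{d.left} (neighbors are comparable); without it an element of $S_1$ could be incomparable to $v$ and would escape the product in hypothesis \eqref{l.cond}, breaking the cancellation. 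Neither point is a gap in substance.
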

\noindent  The version of Theorem \ref{t.ol} proved in \cite{leftseq} is, strictly speaking, more general than we have stated it here, applying to some dependency graphs which are not lefthanded in the sense of Definition \ref{d.left}.

In this note, we prove the following:
\begin{theorem}
\label{t.lisbest}
  If $(G,\leq)$ is a lefthanded graph labeled with numbers $0\leq p_v\leq 1$  such that $\sss(S)\geq 0$ for all $S\sbs V(G)$ and $\sss(\empt)>0$, then there is an assignment of numbers $0\leq x_v\leq 1$ ($v\in V(G)$) such that 
\begin{equation}
\phantom{\textrm{, and}}p_v=x_v\prod_{\substack{u\sim v\\ u\leq v}}(1-x_u)\textrm{, and}
\label{l.lprod}
\end{equation}
\begin{equation}
\prod_{v\in G}(1-x_v)=\sss(\empt).
\label{l.bbound}
\end{equation}
\end{theorem}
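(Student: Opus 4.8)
The plan is to reduce everything to the independence polynomials of induced subgraphs and to exploit the rooted–forest structure underlying the tree-order. For $W\sbs V(G)$ write
\[
\sss_W(\empt)\defn\sum_{\substack{I\sbs W\\ I\textrm{ indep.}}}(-1)^{\abs I}\prod_{v\in I}p_v
\]
for Shearer's function of the induced subgraph $G[W]$ at $\empt$, so that $\sss_V(\empt)=\sss(\empt)$. The basic tool is the deletion recursion $\sss_W(\empt)=\sss_{W\sm m}(\empt)-p_m\,\sss_{W\sm N[m]}(\empt)$ (where $N[m]$ is the closed neighborhood), obtained by splitting the independent sets of $G[W]$ according to whether they contain $m$. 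Since Shearer's condition is hereditary under passing to induced subgraphs (which one gets by letting the probabilities of the deleted vertices tend to $0$), every $\sss_W(\empt)$ is $\geq 0$; the recursion together with $\sss(\empt)>0$ then shows that deleting a vertex never decreases $\sss_W(\empt)$, so in fact $\sss_W(\empt)\geq\sss(\empt)>0$ for all $W$. Writing $D_v\defn\{u:u\leq v\}$ for the principal down-set of $v$ (a subtree of the forest, with $v$ as maximum), I define
\[
1-x_v\defn\frac{\sss_{D_v}(\empt)}{\sss_{D_v\sm v}(\empt)},
\]
and will show this assignment satisfies both (\ref{l.lprod}) and (\ref{l.bbound}); the recursion gives $\sss_{D_v}(\empt)\leq\sss_{D_v\sm v}(\empt)$, whence $x_v\in[0,1]$.

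The identity (\ref{l.bbound}) then comes out as a telescoping over the forest. The key structural fact, using only clause (\ref{c.adjwintree}) of Definition \ref{d.left} together with the tree-order, is that the strict descendants $D_v\sm v$ form the disjoint union of the down-sets $D_c$ over the children $c$ of $v$, with no edges between distinct branches (incomparable vertices are nonadjacent). Multiplicativity of the independence polynomial over connected components gives $\sss_{D_v\sm v}(\empt)=\prod_{c\textrm{ child of }v}\sss_{D_c}(\empt)$. Substituting into $\prod_v(1-x_v)=\prod_v \sss_{D_v}(\empt)/\sss_{D_v\sm v}(\empt)$, each non-root vertex $c$ contributes the factor $\sss_{D_c}(\empt)$ once to the numerator (via $v=c$) and once to the denominator (via its parent), so these cancel and the product collapses to $\prod_{\textrm{roots }r}\sss_{D_r}(\empt)=\sss_V(\empt)=\sss(\empt)$, the last equality because $G$ is the disjoint union of the $G[D_r]$.

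For (\ref{l.lprod}) I apply the deletion recursion to $G[D_v]$ at its maximum $v$, whose neighbors inside $D_v$ are exactly $N^-(v)\defn\{u\sim v:u\leq v\}$. This yields $x_v=p_v\,\sss_{D_v\sm N[v]}(\empt)/\sss_{D_v\sm v}(\empt)$, so that (\ref{l.lprod}) is equivalent to
\[
\prod_{u\in N^-(v)}(1-x_u)=\frac{\sss_{D_v\sm v}(\empt)}{\sss_{D_v\sm N[v]}(\empt)}.
\]
Both sides factor over the branches below $v$: by clause (\ref{c.lordc}), within each branch $B_c=D_c$ the set $S_c\defn N^-(v)\cap B_c$ is, when nonempty, a subtree closed upward toward the branch-root $c$. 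It therefore suffices to prove, for each such branch, the local identity $\prod_{u\in S_c}(1-x_u)=\sss_{D_c}(\empt)/\sss_{D_c\sm S_c}(\empt)$, which I establish by induction on $\abs{S_c}$, peeling off a minimal element $\ell$ of $S_c$. The point is that after deleting $S_c\sm\ell$ from $D_c$ the surviving neighbors of $\ell$ are precisely its own down-neighbors, and its whole connected component is contained in $D_\ell$ (again because adjacency forces comparability and up-sets are chains); hence deleting $\ell$ has exactly the same multiplicative effect, namely $\sss_{D_\ell}(\empt)/\sss_{D_\ell\sm\ell}(\empt)=1-x_\ell$, in the peeled graph as in $G[D_\ell]$, which advances the induction.

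The main obstacle is this last per-branch step: verifying that lefthandedness localizes the deletion of $\ell$ so that the two multiplicative effects coincide. This is exactly where clauses (\ref{c.adjwintree}) and (\ref{c.lordc}) are both used, and it is what ultimately makes the Lefthanded Local Lemma tight on chordal graphs. The only other point requiring care is the positivity bookkeeping—establishing the heredity of Shearer's condition so that every $\sss_W(\empt)$ in sight is strictly positive and every $x_v$ lands in $[0,1]$—but this is routine given the deletion recursion.
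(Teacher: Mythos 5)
Your proposal is correct, and at its core it is the same proof as the paper's. Your $\sss_W(\empt)$ is the paper's $\BB(W)$, and your $D_v$ is the paper's $\bar D_v$; your assignment $1-x_v=\sss_{D_v}(\empt)/\sss_{D_v\sm v}(\empt)=\BB(\bar D_v)/\BB(D_v)$ coincides with the paper's $x_v=p_v\BB(F_v)/\BB(D_v)$ via the deletion recursion $\BB(\bar D_v)=\BB(D_v)-p_v\BB(F_v)$; your branch-multiplicativity over children is the paper's Observation \ref{o.Lasprod}; and your root/child cancellation proving (\ref{l.bbound}) is the paper's final telescoping verbatim. The only genuine difference in organization is in (\ref{l.lprod}): the paper proves the set identity of Observation \ref{o.key}, $\mu(D_v)\dun\Dun_{u\in N_v}\mu(D_u)=N_v\dun\mu(F_v)$, and telescopes the product over $N_v$ in one step, whereas you peel minimal elements of $N_v$ branch by branch; these are equivalent computations, and your check that lefthandedness localizes each peel (everything in the branch strictly above $\ell$ is already deleted, and the surviving component of $\ell$ lies inside $D_\ell$, which is untouched by minimality of $\ell$) is sound and uses clauses (\ref{c.adjwintree}) and (\ref{c.lordc}) exactly where the paper's Observation \ref{o.key} does. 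One step, however, is not ``routine'' as you claim: your justification of heredity of Shearer's condition by ``letting the probabilities of the deleted vertices tend to $0$'' is circular, because the assertion that zeroing (or lowering) labels preserves Shearer's condition is precisely the nontrivial down-set property of Shearer's region, and the deletion recursion cannot supply it (the recursion writes $\sss_W(\empt)$ as a \emph{difference} of two quantities on smaller sets, so no induction on $\abs{W}$ closes). The repair is short: either use the identity $\sss_W(\empt)=\BB(W)=\sum_{R\sbs V(G)\sm W}\sss(R)$, the paper's line (\ref{l.lassum}), which makes $\sss_W(\empt)\geq 0$ an immediate consequence of Shearer's condition on $G$ itself, with strict positivity then following as you argue; or invoke both directions of Theorem \ref{t.S} together with the hypothesis $\sss(\empt)>0$. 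With that one substitution your argument is complete.
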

\noindent Thus, combined with Shearer's theorem, this tells us that for any chordal graph in $\LLL$, the Lefthanded Local Lemma can be used to prove its membership in $\LLL$---thus, for chordal graphs, Shearer's condition can be reduced to local conditions on the neighborhood of each vertex.  Note that in particular, Theorem \ref{t.lisbest} implies that for a given chordal graph, the choice of the order $\leq$ is irrelevant---if the Lefthanded Local Lemma applies to $G$ with one tree-order, it will apply with any tree-order satisfying Definition \ref{d.left}.  

\begin{ex}
  Returning to the example of $K_n^p$, the only tree order choice to make this a lefthanded graph is a linear order, say, $v_n\lneq v_{n-1}\lneq \cdots \lneq v_1$.   For this ordering, under the assignment $x_k=\frac{1}{k}$ for all $k$, the identity
\[
\frac{1}{k}\prod_{j=k+1}^n \frac{j-1}{j}=\frac{1}{n}
\]
implies that condition \eqref{l.cond} is satisfied (with equality in fact) for each $k$, although the product in the conclusion \eqref{l.lconc} is 0 since $x_1=1$.  Reassigning $x_1$ as $(1-\ep)$ for $\ep>0$ shows that the conclusion of the Lov\'asz Local Lemma holds for all $p=(1-\ep)\frac{1}{n}$, equivalently to Shearer's theorem.
\end{ex}

\smallskip

In Section \ref{s.alg} we will see that it can easily and efficiently be checked whether the Lefthanded Local Lemma applies to a given lefthanded graph (by showing that optimal assignments of the $x_v$ such as in the previous example can be found easily).  Together with Corollary \ref{c.G}, this implies that it can be efficiently checked whether any given labeled chordal graph is in the family $\LLL$.  Finally, in Section \ref{s.lop} we remark that the lopsided version of the Lefthanded Local Lemma characterizes the family of lopsidependency graphs to which the conclusion of the Local Lemma holds.

\section{Proof}
\label{s.proof}
Recall that $\leq$ is a tree order.  
We define
\[
D_v:=\{u \lneq v\},\;\;\bar D_v:= D_v\cup \{v\},
\]
and for any subset $U\sbs V$, we let
\[
\mu(U):=\{u\in U\st (\nexists w\in U) (u \lneq w)\}
\]
be the set of maximal elements of $U$.

For the sake of completeness, let's return to Observation \ref{o.leftissub}.

\noindent \textbf{Proving Observation \ref{o.leftissub}:}
  Suppose $G$ be a subtree graph where the underlying tree is $T$, and the vertices are subtrees $\tau_1,\tau_2,\dots,\tau_n\sbs T$.  Fix an arbitrary leaf $x_0$ of $T$, and let $P_i$ denote the unique shortest path from $\tau_i$ to $x_0$.  We define the relation $\leq_T$ on subtrees of $T$ by letting $\tau_i\leq_T\tau_j$ if either:
\begin{enumerate}
\item $P_i$ intersects $\tau_j$ and $P_j$ does not intersect $\tau_i$, \emph{or} 
\item $P_i$ intersects $\tau_j$, $P_j$ intersects $\tau_i$, and $i\leq j$.\label{en.arb}
\end{enumerate}
(Case \ref{en.arb} is ordering the pair arbitrarily according to the order $\tau_1,\dots,\tau_n$.)
It is not hard to check that $\leq$ is a tree-order, and that $G$ is a lefthanded graph with respect to $\leq$.

On the other hand, suppose $G$ is a lefthanded graph with respect to some tree-order $\leq$.   Define a tree $T$ on $V(G)$ where $v_1\sim v_2$ whenever $v_1\in \mu(D_{v_2})$ or $v_2\in \mu(D_{v_1})$.  To each $v\in G$, we associate the subtree $\tau_v$ of $T$ induced by the set 
\[
V(\tau_v):=\{u\leq v\st u=v\textrm{ or }u\sim v\}.
\]
  (These are subtrees by the definition of lefthanded graphs.)  It is not hard to check that $G$ is isomorphic to the intersection graph of the subtrees $\tau_v$.\qed

\bigskip
We will make use of a few more definitions.  We let 
\[
N_v:=\Gamma(v)\cap D_v=\{u\lneq v\st u\sim v\}, \textrm{ and}
\]
\[
F_v:=D_v\stm N_v.
\]

\noindent It will simplify our proof of Theorem \ref{t.lisbest} to get the following simple observation out of the way.  We include a detailed proof so that it is clear how our various definitions are being used.   $\dot\cup$ indicates a union which is always disjoint.
\begin{observ}
\label{o.key}
For any $v\in V$, we have 
\begin{equation*}
\mu(D_v)\dun \dot{\bigcup_{u\in N_v}}\mu(D_u)=N_v\dun \mu(F_v).
\label{l.key}
\end{equation*}
\end{observ}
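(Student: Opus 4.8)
The plan is to prove the identity by double inclusion, using a concrete reading of the tree-order as a forest. First I would observe that, since $\leq$ is a tree order, the set of elements strictly above any fixed $x$ is a chain (if $x\lneq u$ and $x\lneq v$, then $u,v$ are comparable by the definition of a tree order). Hence every $x\lneq v$ has a unique \emph{parent} $p(x)$, namely the minimum of the finite nonempty chain $\{y\st x\lneq y\}$, and this parent satisfies $x\lessdot p(x)\leq v$. With this dictionary in hand, $\mu(D_w)$ is exactly the set of children of $w$ (the elements covered by $w$ from below): maximality of $x$ in $D_w$ says precisely that no element lies strictly between $x$ and $w$.

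Given this, I would first dispose of the two disjointness claims, which are the easy part. The right-hand union is disjoint because $N_v$ and $\mu(F_v)\sbs F_v=D_v\sm N_v$ are disjoint by definition. The left-hand union is disjoint because each element has a unique parent, so the child-sets $\mu(D_v)$ and $\mu(D_u)$ ($u\in N_v$) attach to the pairwise-distinct parents $v$ and the various $u$, and no element can lie in two of them. For the set equality I would classify each element $x$ by its parent $p(x)$ together with whether $x\sim v$. Reading the left side as ``$x$ is a child of $v$ or of some $u\in N_v$'', the cases (parent $=v$ or parent $\in N_v$) against (adjacent or not adjacent to $v$) each land in the expected piece of the right side: a child that is adjacent to $v$ lies in $N_v$, while a non-adjacent child lies in $F_v$, where I must check it is maximal. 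The reverse inclusion is symmetric: for $x\in N_v$ or $x\in\mu(F_v)$ I would pass to the parent $p(x)$ and argue that $p(x)=v$ or $p(x)\in N_v$, placing $x$ in the appropriate child-set on the left.

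The step I expect to be the real work — and the only place where the lefthanded condition~\ref{c.lordc} of Definition~\ref{d.left} enters — is showing that a non-neighbor child $x$ of some $u\in N_v$ is \emph{maximal} in $F_v$, together with its dual, that for $x\in N_v$ with $p(x)\neq v$ the parent $p(x)$ again lies in $N_v$. Both reduce to the single propagation claim that adjacency to $v$ travels up the chain from $x$ to $v$: if $u\in N_v$ and $u\lneq w\lneq v$, then $w\in N_v$. This is exactly condition~\ref{c.lordc} applied with $(u\lneq w\lneq v)$ and $v\sim u$, which yields $v\sim w$ and hence $w\in N_v$. Once this propagation is in place, every element strictly between $x$ and $v$ belongs to $N_v$ rather than $F_v$, so no element of $F_v$ sits above $x$ and $x$ is maximal; the dual statement for $N_v$ follows by the same application of condition~\ref{c.lordc}. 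The remaining verifications are then immediate from the parent dictionary.
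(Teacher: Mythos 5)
Your proof is correct and is essentially the paper's own argument: a double inclusion in which condition~\ref{c.lordc} enters exactly twice (to make a non-neighbor below some $u\in N_v$ maximal in $F_v$, and to push membership in $N_v$ up to the next element), with your parent $p(x)$ being precisely the paper's ``minimal $u$ with $w\lneq u\lneq v$.'' Your explicit treatment of the disjointness of the unions is a small addition the paper leaves implicit, but the route is the same.
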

\begin{proof}
We begin by proving containment in the $\sbs$ direction. 
If $w\in D_v\stm N_v$ is not maximal in $F_v$, then there is a $z\in N_v$ $w\lneq z$, but then $w$ isn't maximal in $D_v$ either.  This shows $\mu(D_v)\sbs N_v\dun \mu(F_v)$.
Next we consider the case where $w\in \mu(D_u)$ for some $u\in N_v$.  If $w\nin N_v$ then it is in $F_v$.  We must also have $w\in \mu(F_v)$ unless there is a vertex $z$ with $w\lneq z\lneq v$ such that $z\not\in N_v$.  Since $\leq$ is a tree-order $z$ and $u$ must be comparable.  But $w\lneq z\lneq u$ would contradict that $w\in \mu(D_u)$, and $w\lneq u\lneq z\lneq v$ would contradict condition (\ref{c.lordc}) in the definition of a lefthanded graph, since $u\in N_v$ but $z\not\in N_v$.

We now prove containment in the $\supset$ direction.  If $w\in \mu(F_v)$ is not a maximal element of $D_v$, then there is a vertex $u$ such that $w\lneq u\lneq v$, and we can choose $u$ to be a minimal such vertex with respect to $\leq$.  We have $u\in N_v$ since $w$ is maximal in $F_v$, and $w\in \mu(D_u)$ by our minimal choice of $u$.  Turning to the case where $w\in N_v$ instead, we have either that $w\in \mu(D_v)$, or else we can consider a minimal vertex $u$ satisfying $w\lneq u\lneq v$; we have $u\in N_v$ by condition (\ref{c.lordc}) in the definition of a lefthanded graph and we have $w\in \mu(D_u)$ by our minimal choice of $u$.
\end{proof}

Recall now that 
\[
\sss(S)=\sum_{\substack{I\supset S\\I\textrm{ indep.}}}(-1)^{\abs{I}-\abs{S}}\prod_{v\in I}p_v
\]
and define 
\begin{equation}
\BB(S):=\sum_{\substack{I\sbs S\\I\textrm{ indep.}}}(-1)^{\abs{I}}\prod_{v\in I}p_v.
\end{equation}

\begin{observ}
For any down-closed $S\sbs V$, we have
\[
\BB(S)=\prod_{w\in \mu(S)}\BB(\bar D_w).
\]
\label{o.Lasprod}
\end{observ}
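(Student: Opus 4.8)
The plan is to show that, for a down-closed $S$, the sets $\bar D_w$ with $w\in\mu(S)$ partition $S$ into mutually non-adjacent blocks; the product formula then drops out because $\BB$ is just an independence polynomial (evaluated with weights $-p_v$), and such a polynomial factors over a disjoint union of graphs. Concretely, rewrite $\BB(U)=\sum_{I\sbs U,\,I\text{ indep.}}\prod_{v\in I}(-p_v)$, so that the statement is the multiplicativity of this weighted count.

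First I would establish that $S=\Dun_{w\in\mu(S)}\bar D_w$ as a genuinely disjoint union. For the covering direction, any $s\in S$ lies below some maximal element $w$ of the finite set $\{t\in S\st t\geq s\}$, and any such $w$ is in fact maximal in all of $S$, so $s\in\bar D_w$ with $w\in\mu(S)$; conversely $\bar D_w\sbs S$ because $S$ is down-closed and $w\in S$. For disjointness, take distinct $w_1,w_2\in\mu(S)$, which are incomparable, and suppose $x\in\bar D_{w_1}\cap\bar D_{w_2}$. Then $x\leq w_1$ and $x\leq w_2$ cannot both be strict, since the tree-order property would then force $w_1$ and $w_2$ to be comparable; and $x$ equal to one of $w_1,w_2$ likewise forces the two to be comparable. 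Either way we reach a contradiction, so the blocks are pairwise disjoint.

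The key step is then to show there are no edges of $G$ between distinct blocks. Suppose $u\in\bar D_{w_1}$ and $v\in\bar D_{w_2}$ with $u\sim v$ and $w_1\neq w_2$. By condition (\ref{c.adjwintree}) the endpoints are comparable, say $u\lneq v$; combining $u\lneq v\leq w_2$ gives $u\lneq w_2$, hence $u\in\bar D_{w_2}$, which contradicts disjointness since also $u\in\bar D_{w_1}$. Therefore the induced subgraph $G[S]$ is the disjoint union, with no edges between parts, of the induced subgraphs $G[\bar D_w]$. I expect this non-adjacency claim to be the only real content of the proof; note it uses only the tree-order property together with condition (\ref{c.adjwintree}), and not condition (\ref{c.lordc}).

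Finally I would cash this in. Because there are no cross-edges, a set $I\sbs S$ is independent precisely when each restriction $I\cap\bar D_w$ is independent, with no constraint linking the blocks; thus independent subsets of $S$ correspond bijectively to tuples of independent subsets of the $\bar D_w$, and the weight $\prod_{v\in I}(-p_v)$ multiplies across blocks. Summing, the expression factors as $\BB(S)=\prod_{w\in\mu(S)}\BB(\bar D_w)$, which is the desired identity (the empty case $S=\empt$ giving the empty product equal to $\BB(\empt)=1$). The remaining steps beyond the non-adjacency claim are the standard distributivity of a sum of products over a disjoint index set, so no delicate estimates are involved.
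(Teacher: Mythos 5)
Your proof is correct and takes essentially the same route as the paper: the paper's (one-sentence) argument likewise observes that the sets $\bar D_w$, $w\in\mu(S)$, partition $S$ with no edges between blocks, whence the identity follows from the definition of $\BB$ as a weighted independence sum. You have simply supplied the details (covering, disjointness via the tree-order property, non-adjacency via condition (\ref{c.adjwintree}), and the factorization) that the paper leaves to the reader.
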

\noindent (A set is `down-closed' if $D_s\sbs S$ for all $s\in S$. )
\begin{proof}
This follows from the definition of $\BB$, observing that since $\leq$ is a tree order we have that the sets $\bar D_w$ ($w\in \mu(S)$) form a partition of $S$, and have no edges between each other.
\end{proof}

\begin{observ}
If $(G,\leq)$ is a lefthanded dependency graph labeled with numbers $0\leq p_v\leq 1$ such that 
\begin{equation}
\sss(S)\geq 0\textrm{ for all }S\sbs V(G),
\label{l.sall}
\end{equation}
then  
\begin{equation}
0\leq \BB(S)\leq 1\textrm{ for all }S\sbs V(G).
\label{l.ls}
\end{equation}
Furthermore,
\begin{equation}
\BB(D_v)\geq p_v\BB(F_v).
\label{l.DgF}
\end{equation}

\end{observ}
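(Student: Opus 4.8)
The plan is to reduce all three assertions to a single structural fact---that Shearer's condition is inherited by induced subgraphs---which I will extract from a one-vertex deletion recursion. Two elementary identities set this up. First, splitting the independent subsets of a set $S$ according to whether they contain a chosen vertex $v\in S$ (and noting that an independent set containing $v$ must avoid $\Gamma(v)$) gives the deletion recursion
\[
\BB(S)=\BB(S\setminus\{v\})-p_v\,\BB\bigl(S\setminus(\Gamma(v)\cup\{v\})\bigr).
\]
Second, directly from the definitions, $\BB(S)=\sss_{G[S]}(\empt)$, where $\sss_{G[S]}$ denotes the Shearer sum computed in the induced subgraph on $S$, since the independent subsets of $S$ are the same in $G$ as in $G[S]$.

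The heart of the argument is the companion identity for $\sss$: for $v\notin S$,
\[
\sss_{G-v}(S)=\sss_G(S)+\sss_G(S\cup\{v\}).
\]
I would prove this by splitting the independent sets $I\supseteq S$ appearing in $\sss_G(S)$ according to whether $v\in I$: the $v\notin I$ contribution is precisely $\sss_{G-v}(S)$, while the sign $(-1)^{\abs{I}-\abs{S}}$ turns the $v\in I$ contribution into $-\sss_G(S\cup\{v\})$. Since both summands on the right are nonnegative by hypothesis \eqref{l.sall}, we get $\sss_{G-v}(S)\geq 0$. Iterating over successive deletions (using $G[W]-v=G[W\setminus\{v\}]$), a downward induction on $\abs{W}$ starting from $W=V$ shows $\sss_{G[W]}(S)\geq 0$ for every $W\subseteq V$ and every $S\subseteq W$. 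Specializing to $S=\empt$ gives $\BB(W)=\sss_{G[W]}(\empt)\geq 0$ for all $W$, which is the lower bound in \eqref{l.ls}.

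The remaining claims then follow quickly. For the upper bound $\BB(S)\leq 1$ I would peel vertices off one at a time via the $\BB$ recursion: each step subtracts the quantity $p_v\BB\bigl(S\setminus(\Gamma(v)\cup\{v\})\bigr)$, which is nonnegative by the lower bound just established, so $\BB(S)\leq\BB(S\setminus\{v\})\leq\cdots\leq\BB(\empt)=1$. For \eqref{l.DgF} I apply the $\BB$ recursion to $S=\bar D_v$ with distinguished vertex $v$: here $\bar D_v\setminus\{v\}=D_v$, and within $\bar D_v$ the neighbors of $v$ are exactly $N_v$, so $\bar D_v\setminus(\Gamma(v)\cup\{v\})=D_v\setminus N_v=F_v$. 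The recursion thus reads $\BB(\bar D_v)=\BB(D_v)-p_v\BB(F_v)$, and since its left side is nonnegative by the lower bound, we obtain $\BB(D_v)\geq p_v\BB(F_v)$.

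The step I expect to demand the most care is the deletion identity for $\sss$---specifically the sign bookkeeping and the verification that dropping $v$ from both the graph and the admissible sets makes the $v\notin I$ part literally equal to $\sss_{G-v}(S)$. Everything else is routine manipulation of the two recursions. It is worth noting that this argument barely uses the lefthanded structure: the bounds \eqref{l.ls} hold for any graph satisfying Shearer's condition, and the tree order enters only through the identification $\bar D_v\setminus(\Gamma(v)\cup\{v\})=F_v$ used for \eqref{l.DgF}.
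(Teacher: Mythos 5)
Your proof is correct, but it reaches \eqref{l.ls} by a genuinely different route than the paper. The paper quotes two global identities (easy to verify, or found in Shearer): $\sum_{R}\sss(R)=1$ and $\BB(S)=\sum_{R\sbs V\sm S}\sss(R)$, from which both bounds in \eqref{l.ls} are immediate---nonnegativity because every summand is nonnegative by \eqref{l.sall}, and the upper bound because that partial sum is dominated by the total sum $1$. You instead prove the one-vertex deletion identity $\sss_{G-v}(S)=\sss_G(S)+\sss_G(S\cup\{v\})$, conclude that Shearer's condition is inherited by induced subgraphs, and use the identification $\BB(W)=\sss_{G[W]}(\empt)$ to get nonnegativity; the upper bound then needs a second pass through the $\BB$-recursion, peeling off one vertex at a time. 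The two arguments are close cousins---iterating your deletion identity over all vertices outside $S$ recovers exactly the paper's identity \eqref{l.lassum}---but yours is more self-contained (it proves rather than cites the needed facts) and isolates a statement of independent interest: Shearer's condition is hereditary under taking induced subgraphs, for arbitrary graphs, with no use of the tree order. The paper's route is shorter, getting both bounds of \eqref{l.ls} in one stroke. For \eqref{l.DgF} the two proofs coincide: both rest on the identity $\BB(\bar D_v)=\BB(D_v)-p_v\BB(F_v)$ (your computation $\bar D_v\sm(\Gamma(v)\cup\{v\})=F_v$ is just the definition of $N_v$ and $F_v$, which the paper leaves implicit) together with the already-established nonnegativity of $\BB(\bar D_v)$.
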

\begin{proof}
It is easy to verify from the definitions (or c.f. \cite{shear}) that
\begin{equation}
  \sum_{R} \sss(R)=1
\label{l.ssum}
\end{equation}
and
\begin{equation}
 \BB(S)=\sum_{R\sbs \bar S}\sss(R).
\label{l.lassum}
\end{equation}
The latter sum is at least 0 by line (\ref{l.sall}), and at most 1 by line (\ref{l.ssum}).  This proves (\ref{l.ls}).  Line (\ref{l.DgF}) follows from line (\ref{l.ls}), coupled with the observation that 
\begin{equation}
\BB(D_v)-p_v\BB(F_v)=\BB(\bar D_v).
\end{equation}
\end{proof}
Finally, let us note that if $\BB(S)=0$ for any $S\sbs V$, then (\ref{l.sall}) and (\ref{l.lassum}) imply that $\sss(\empt)=\BB(V)=0$, and so Theorem \ref{t.lisbest} is trivially true.  Thus we may assume 
\begin{equation}
0<\BB(S)\textrm{ for all }S\sbs V(G).
\label{l.lpos}
\end{equation}

 We will show that Theorem \ref{t.lisbest} holds with the assignment
\begin{equation}
x_v=\frac{p_v\BB(F_v)}{\BB(D_v)},
\end{equation}
which is well-defined by line (\ref{l.lpos}).  Observe that lines (\ref{l.ls}) and (\ref{l.DgF}) imply that $0\leq x_v\leq 1$ for all $v$.  It can be checked directly that $x_v\geq p_v$ as well, but this is also a consequence of our proof.

Now for any non-minimal vertex we have that
\begin{multline}
x_v\prod_{u\in N_v}(1-x_u)=
\frac{p_v\BB(F_v)}{\BB(D_v)}  \prod_{u\in N_v}\left(1-\frac{p_u\BB(F_u)}{\BB(D_u)}\right)\\=
\frac{p_v\BB(F_v)}{\BB(D_v)}  \prod_{u\in N_v}\left(\frac{\BB(D_u)-p_u\BB(F_u)}{\BB(D_u)}\right)=
\frac{p_v\BB(F_v)}{\BB(D_v)}  \prod_{u\in N_v}\left(\frac{\BB(\bar D_u)}{\BB(D_u)}\right).
\label{l.spv}
\end{multline}

\noindent Observation 
  \ref{o.Lasprod} gives that 
\begin{equation}
\prod_{u\in N_v}\left(\frac{\BB(\bar D_u)}{\BB(D_u)}\right)=
\prod_{u\in N_v}\left(\BB(\bar D_u)\prod_{w\in \mu(D_u)}\frac{1}{\BB(\bar D_w)}\right),
\label{l.mout}
\end{equation}
and Observation \ref{o.key} implies that this product telescopes as
\begin{equation}
\prod_{u\in N_v}\left(\BB(\bar D_u)\prod_{w\in \mu(D_u)}\frac{1}{\BB(\bar D_w)}\right)=
  \left(\prod_{u\in \mu(D_v)}\BB(\bar D_u)\right)  \left(\prod_{u\in \mu(F_v)}\frac 1 {\BB(\bar D_u)}\right).
\label{l.usekey}
\end{equation}
Putting together lines (\ref{l.spv}), (\ref{l.mout}), and (\ref{l.usekey}), and then applying Observation \ref{o.Lasprod} to both products on the right-hand side of (\ref{l.usekey}), we get that
\begin{multline}
 x_v\prod_{u\in N_v}(1-x_u)=
\frac{p_v\BB(F_v)}{\BB(D_v)}\left(\prod_{u\in \mu(D_v)}\BB(\bar D_u)\right)  \left(\prod_{u\in \mu(F_v)}\frac 1 {\BB(\bar D_u)}\right)\\=
\frac{p_v\BB(F_v)}{\BB(D_v)}\cdot \frac{\BB(D_v)}{\BB(F_v)}=p_v
\end{multline}
and so  we have completed the proof of the first part (line (\ref{l.lprod})) of Theorem \ref{t.lisbest}.  To get the second part (line (\ref{l.bbound})), observe that with the assignment
\[
x_v=\frac{p_v\BB(F_v)}{\BB(D_v)},
\]
we have that 
\[
\prod_{v\in V}(1-x_v)=\prod_{v\in V}\left(1-\frac{p_v\BB(F_v)}{\BB(D_v)}\right)=\prod_{v\in V}\left(\frac{\BB(\bar D_v)}{\BB(D_v)}\right).
\]

Observation \ref{o.Lasprod} applied with $S=D_v$ for each $v$ implies that the final product telescopes to 
\[
\prod_{v\in V}\left(\frac{\BB(\bar D_v)}{\BB(D_v)}\right)=\prod_{v\in \mu(V)}\BB(\bar D_v)=\BB(V),
\]
and $\BB(V)=\sss(\empt)$ by the definitions of $\BB$ and $\sss$, giving
\[
\prod_{v\in V}(1-x_v)=\sss(\empt).
\]
This completes the proof of Theorem \ref{t.lisbest}.\qed

\section{Algorithm}
\label{s.alg}
To speak about the efficiency with which we can check whether a given graph is in $\LLL$, we must say something about the input structure of the labels $p_v$.  For our purposes we will just assume that the input structure allows efficient arithmetic and comparison operations (so the labels may be finite decimal expansions, or arbitrary rational numbers given as ratios of integers, \emph{etc.}).  Recall Corollary \ref{c.G}, which states that lefthanded orderings of chordal graphs can be found efficiently.  In this framework, the following is an algorithmic result:
\begin{cor}
Let $v_1,v_2,\dots,v_n$ be a linear extension of a lefthanded tree-order $\leq$ for some labeled chordal graph $G$.  Taking the vertices of $G$ in this order, and recursively computing
\begin{equation}
x_{v_i}=\bfrac{p_{v_i}}{\prod_{\substack{u\sim v_i\\u\leq v_i}}(1-x_u)}
\label{l.compute}
\end{equation}
for each $1\leq i\leq n$, we have that $G\in \LLL$ if and only if $0\leq x_{v_i}<1$ for each $x_{v_i}$.
\label{c.alg}
\end{cor}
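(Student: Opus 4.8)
The plan is to observe that the recursion (\ref{l.compute}) is exactly the equation (\ref{l.lprod}) of Theorem \ref{t.lisbest} solved for the current variable. Indeed, the constraints $u \sim v_i$ and $u \leq v_i$ together say precisely that $u \in N_{v_i}$, and since $v_1, \dots, v_n$ is a linear extension of $\leq$ every such $u$ precedes $v_i$ and has already been assigned a value; so (\ref{l.compute}) computes, one vertex at a time, the unique values making $p_{v_i} = x_{v_i}\prod_{u \in N_{v_i}}(1 - x_u)$ hold, whenever the denominators are nonzero. I would first use Shearer's theorem (Theorem \ref{t.S}) to restate membership: $G \in \LLL$ if and only if $\sss(S) \geq 0$ for all $S \sbs V(G)$ and $\sss(\empt) > 0$. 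The two implications of the corollary then come from pairing the above observation with Theorem \ref{t.lisbest} and with the Lefthanded Local Lemma (Theorem \ref{t.ol}), respectively.

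For the forward implication, suppose $G \in \LLL$, so that $\sss(S) \geq 0$ for all $S$ and $\sss(\empt) > 0$. These are exactly the hypotheses of Theorem \ref{t.lisbest}, which hands us an assignment $\tilde x_v \in [0,1]$ satisfying (\ref{l.lprod}) together with $\prod_{v}(1 - \tilde x_v) = \sss(\empt) > 0$ by (\ref{l.bbound}); positivity of this product forces $\tilde x_v < 1$ for every $v$. I would then check by induction along $v_1, \dots, v_n$ that the recursively defined $x_{v_i}$ coincide with these $\tilde x_{v_i}$: assuming $x_{v_j} = \tilde x_{v_j}$ for all $j < i$, the denominator $\prod_{u \in N_{v_i}}(1 - x_u) = \prod_{u \in N_{v_i}}(1 - \tilde x_u)$ is a product of factors in $(0,1]$ (the empty product being $1$), hence positive, so $x_{v_i} = p_{v_i}/\prod_{u \in N_{v_i}}(1 - \tilde x_u) = \tilde x_{v_i} \in [0,1)$. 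Thus the recursion runs to completion with every value in $[0,1)$.

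For the converse, suppose the recursion produces $0 \leq x_{v_i} < 1$ for every $i$. Rearranging (\ref{l.compute}) shows these values satisfy $p_v = x_v \prod_{u \sim v,\, u \leq v}(1 - x_u)$, so for any family of events $\{A_v\}$ with $G$ as a dependency graph and $\pp(A_v) \leq p_v$, condition (\ref{l.cond}) of the Lefthanded Local Lemma holds with equality. Theorem \ref{t.ol} then gives $\pp(\bigcap_v \bar A_v) \geq \prod_v(1 - x_v)$, and this product is strictly positive because each $x_v < 1$; hence $G \in \LLL$.

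The one delicate point---the nearest thing to an obstacle---is the well-definedness of (\ref{l.compute}), since a vanishing denominator would leave $x_{v_i}$ undefined. This is handled by the stated criterion itself: as long as every earlier value lies in $[0,1)$, each factor $1 - x_u$ lies in $(0,1]$, so the denominator is positive and $x_{v_i}$ is both well-defined and automatically nonnegative (as $p_{v_i} \geq 0$). Consequently the only way the criterion can fail is for some $x_{v_i}$ to reach $1$ or more, and ``$0 \leq x_{v_i} < 1$ for each $x_{v_i}$'' is equivalent to the recursion completing without ever producing such a value. Finally, efficiency is immediate: a lefthanded linear extension exists by Corollary \ref{c.G}, and each application of (\ref{l.compute}) costs $O(\deg v_i)$ arithmetic and comparison operations, which are unit-cost in the assumed input model.
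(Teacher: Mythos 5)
Your proposal is correct and follows essentially the same route as the paper's proof: membership in $\LLL$ is reduced (via Shearer's theorem, Theorem \ref{t.lisbest}, and Theorem \ref{t.ol}) to the existence of an assignment $0\leq x_v<1$ satisfying $p_v=x_v\prod_{u\sim v,\,u\leq v}(1-x_u)$, and the recursion (\ref{l.compute}) is observed to be completely forced by these equations. Your write-up simply makes explicit two points the paper leaves terse---the induction showing the recursively computed values coincide with the assignment from Theorem \ref{t.lisbest}, and the well-definedness of the recursion---so it is the same argument in greater detail.
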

\begin{proof}
Theorems \ref{t.ol} and \ref{t.lisbest} imply that to determine membership in $\LLL$, it is enough to determine whether there is an assignment $0\leq x_v<1$ satisfying all the conditions
\begin{equation}
p_v= x_v\prod_{\substack{u\sim v\\u\leq v}} (1-x_u).
\label{l.conds}
\end{equation}
If the assignments $x_v$ computed by the described procedure all lie in $[0,1)$, then they are such an assignment.  On the other hand, we see that each computation made according to line (\ref{l.compute}) is completely determined by the condition (\ref{l.conds}), so if any computed $x_v$ lies outside of $[0,1)$ then no satisfying assignment exists and Theorem \ref{t.lisbest} implies that $G\not\in \LLL$.
\end{proof}

\begin{ex}
  Consider the lefthanded realization of the Goldner--Harary graph shown in Figure \ref{f.GH}.  We will verify that the conclusion of the Local Lemma holds for Goldner--Harary graph labeled with $\frac 1 8$ at each vertex.  Beginning at the `bottom' of the tree order given in Figure \ref{f.GH}, we assign
\[
\begin{array}{rc}
x_a=x_d=x_c=x_i=x_h=x_k=&\frac 1 8,\\
x_b=x_j=\frac{1/8}{(7/8)^3}=&\frac {64}{343},\\
x_e=\frac{1/8}{(279/343)^2(7/8)^4}=&\frac{25088}{77841},\\
x_f=\frac{1/8}{(279/343)^2(7/8)^4(52753/77841)}=&\frac{25088}{52753},\\
x_g=\frac{1/8}{(279/343)^2(7/8)^4(52753/77841)(27665/52753)}=&\frac{25088}{27665},\\
\end{array}
\]
and since the computed assignments all lie in $[0,1)$, Corollary \ref{c.alg} implies that the conclusion of the Lov\'asz Local Lemma holds.  In the same manner one can verify that the conclusion of the Local Lemma holds with labels of $\frac{101}{800}$ at every vertex as well.  With labels of $\frac{102}{800}$, however, the computed $x_g$ is $\geq 1$,
and so Corollary \ref{c.alg} implies that the conclusion of the Local Lemma does not always hold for the Golder--Harary graph with this labeling.
\end{ex}

It should also be noted that when leaving $p$ as a variable, the algorithm described here gives a linear number of polynomial inequalities describing the values of $p$ for which uniform labeling by $p$ of a given graph implies membership in $\LLL$.    In the case of the Goldner--Harary graph, for example, the system can be solved by computer algebra software to find that the threshold value of $p$ is the smallest real root of the equation 
\[
1-11p+28p^2-29p^3+17p^4-6p^5+p^6=0,
\]
which lies between $.12689$ and $.126891$.

\section{Lopsidedness}
\label{s.lop}
In '91, Erd\H{o}s and Spencer proved a `Lopsided' version of the Lov\'asz Local Lemma, which relaxed the notion of independence required to apply the Local Lemma.  In their Lopsided Local Lemma, the notion of a dependency graph in which events are independent of families of non-neighbors is dropped.  They observed that the standard proof of the Local Lemma still works if we instead modify condition (\ref{l.lllcond}) to require that 
\[
  \pp\left(A_v|\bigcap_{w\in W}\bar A_w\right)\leq x_v\prod_{u\sim v}(1-x_u)
\]
for any family $W$ of nonneighbors of $v$ in the graph $G$.

Their lemma gives rise to a natural notion of a `lopsidependency graph' for a set of events $\{A_v\}_{v\in V}$, as a graph $G$ labeled with probabilities $0\leq p_v\leq 1$ such that for each $v$, 
\[
\pp\left(A_v|\bigcap_{w\in W}\bar A_w\right)\leq p_v
\]
for any family $W$ of nonneighbors of $v$.  Like the standard Local Lemma, the Lefthanded Local Lemma is true in a Lopsided sense:
\begin{theorem}[Lefthanded Lopsided Local Lemma \cite{leftseq}]
\label{t.lopleft}
Consider a family of events $\{A_v\}_{(v\in V)}$ with lefthanded lopsidependency graph $(G,\leq)$ with labels $p_v$.
If there is an assignment of numbers $0\leq x_v\leq 1$ such that
  \begin{equation}
   p_v\leq x_v\prod_{\substack{u\sim v\\ u\leq v}}(1-x_u),
    \label{l.lcond}
  \end{equation}
for all $v$, then we have
  \begin{equation}
    \pp\left(\bigcap_{A\in \aaa}\bar A\right)\geq \prod_{v\in G}(1-x_v).
\label{l.llconc}
  \end{equation}
\end{theorem}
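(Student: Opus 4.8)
The plan is to run the classical inductive argument behind the Local Lemma, organized along the tree order so that only left-neighbors ever enter the key estimate. The heart of the matter is the following claim, proved by induction on $\abs{S}$: for every $v\in V$ and every \emph{down-closed} $S\sbs V$ with $v\nin S$,
\begin{equation*}
\pp\left(A_v\mid\bigcap_{w\in S}\bar A_w\right)\leq x_v.
\end{equation*}
(As in the $K_n^p$ example one may assume each $x_v<1$, handling $x_v=1$ by perturbation, so that every conditional probability below is well-defined.) Granting the claim, I would order $V=\{v_1,\dots,v_n\}$ as a linear extension of $\leq$ and telescope
\begin{equation*}
\pp\left(\bigcap_{v\in V}\bar A_v\right)=\prod_{i=1}^n\pp\left(\bar A_{v_i}\mid\bigcap_{j<i}\bar A_{v_j}\right)\geq\prod_{i=1}^n(1-x_{v_i}),
\end{equation*}
using that each prefix $\{v_1,\dots,v_{i-1}\}$ is down-closed and omits $v_i$, so the claim bounds each factor below by $1-x_{v_i}$; this is exactly \eqref{l.llconc}.

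For the inductive step I would write $S=S_1\dun S_2$ with $S_1$ the neighbors of $v$ in $S$ and $S_2$ the nonneighbors, and expand
\begin{equation*}
\pp\left(A_v\mid\bigcap_{w\in S}\bar A_w\right)=\frac{\pp\left(A_v\cap\bigcap_{w\in S_1}\bar A_w\mid\bigcap_{w\in S_2}\bar A_w\right)}{\pp\left(\bigcap_{w\in S_1}\bar A_w\mid\bigcap_{w\in S_2}\bar A_w\right)}.
\end{equation*}
The numerator is at most $\pp\left(A_v\mid\bigcap_{w\in S_2}\bar A_w\right)\leq p_v$, which is exactly where the lopsidependency hypothesis enters: $S_2$ is a family of nonneighbors of $v$. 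For the denominator I would order $S_1=\{u_1,\dots,u_k\}$ by the linear extension, telescope it into factors $\pp\left(\bar A_{u_l}\mid\bigcap_{w\in S_2}\bar A_w\cap\bigcap_{m<l}\bar A_{u_m}\right)$, and apply the inductive hypothesis to each (the conditioning sets are strictly smaller and down-closed) to bound the denominator below by $\prod_{u\in S_1}(1-x_u)$. Since $S_1\sbs N_v$ and each factor lies in $[0,1]$, this is in turn at least $\prod_{u\in N_v}(1-x_u)$, so condition \eqref{l.lcond} yields
\begin{equation*}
\pp\left(A_v\mid\bigcap_{w\in S}\bar A_w\right)\leq\frac{p_v}{\prod_{u\in N_v}(1-x_u)}\leq x_v.
\end{equation*}

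The only place the lefthanded structure is needed---and the step I expect to require the most care---is justifying that the induction stays inside the class of down-closed conditioning sets whose neighbors of $v$ all lie to the left. Two structural facts must be checked. First, for down-closed $S$ with $v\nin S$ every neighbor of $v$ in $S$ lies in $N_v$: such a neighbor is comparable to $v$ by condition~(\ref{c.adjwintree}), and it cannot lie above $v$, since down-closedness would then force $v\in S$; hence $S_1\sbs N_v$ and the denominator runs only over left-neighbors, matching \eqref{l.lcond}. Second, each set $S_2\cup\{u_1,\dots,u_{l-1}\}$ appearing in the denominator must itself be down-closed. Showing $S_2=S\stm N_v$ is down-closed reduces to excluding a vertex $w\in S_2$ with a predecessor $w'\lneq w$, $w'\in N_v$; the dangerous configuration $w'\lneq w\lneq v$ with $w\not\sim v$ is precisely what condition~(\ref{c.lordc}) forbids, and the remaining cases contradict $v\nin S$ or the membership $w\in S_2$ directly. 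Re-adding the $u_m$ in linear-extension order preserves down-closedness because, for each $u_m\in N_v\sbs D_v$ with $m<l$, the set $D_{u_m}\sbs S$ splits into its part in $S_2$ and its part in $N_v$, and any $w\lneq u_m$ lying in $N_v$ precedes $u_m$ in the linear extension, hence lies among $u_1,\dots,u_{m-1}$. With these two facts in hand the estimates above apply verbatim, closing the induction and establishing \eqref{l.llconc}.
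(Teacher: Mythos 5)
Your proposal is correct, but a direct comparison is impossible for an interesting reason: the paper never proves Theorem~\ref{t.lopleft}. It is stated with an attribution to \cite{leftseq} (the paper remarks, in connection with Theorem~\ref{t.ol}, that the version proved there is strictly more general than the one stated here), and within this paper the theorem is only \emph{used}, to conclude that the lefthanded condition also characterizes chordal membership in $\LLL^L$. So your argument stands as a self-contained proof, and as such it works. It is the classical Local Lemma induction with one genuinely structural new ingredient: the class of conditioning sets is restricted to down-closed sets, which is exactly what lets hypothesis \eqref{l.lcond}---involving only left-neighbors---suffice. Your two verifications are precisely the points where lefthandedness is needed, and both are sound: a neighbor $w$ of $v$ inside a down-closed $S$ with $v\nin S$ must satisfy $w\lneq v$ (comparability from condition~(\ref{c.adjwintree}), while $v\lneq w$ would force $v\in S$); and the down-closedness of $S_2$ and of each $S_2\cup\{u_1,\dots,u_{l-1}\}$ follows from condition~(\ref{c.lordc}) together with the tree-order property, as you argue. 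The final telescoping over a linear extension, whose prefixes are down-closed, is also correct, as is the inequality $\prod_{u\in S_1}(1-x_u)\geq\prod_{u\in N_v}(1-x_u)$ coming from $S_1\sbs N_v$.

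Two small repairs would make the write-up airtight. First, ``every conditional probability below is well-defined'' does not follow from $x_v<1$ alone; positivity must be carried through the induction. This is routine: strengthen the claim to assert also that $\pp\left(\bigcap_{w\in S}\bar A_w\right)\geq\prod_{w\in S}(1-x_w)>0$ for every down-closed $S$, which follows by telescoping $S$ along a linear extension and applying the conditional bound to its (down-closed, strictly smaller) prefixes. Second, the case in which some $x_v=1$ needs no perturbation and is better not handled by one: the right-hand side of \eqref{l.llconc} is then $0$, so the conclusion is vacuous. By contrast, lowering such an $x_v$ to $1-\ep$ while keeping the labels fixed can violate \eqref{l.lcond} at $v$ itself (for instance when equality holds there), so the perturbation you gesture at is not actually available without also shrinking the $p_u$, as is done in the paper's $K_n^p$ example.
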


The notion of a lopsidependency graph allows us to define a family $\LLL^L$ as the family of graphs $G$ with vertices labeled with real numbers $0\leq p_v\leq 1$ with the property that any family of events $A_v$ having $G$ as a lopsidependency graph satisfies $\pp(\bigcap \bar A_v)>0$.  One question that seems natural is whether Theorem \ref{t.lopleft} charaterizes the $\LLL^L$ for chordal graphs, as Theorem \ref{t.ol} does for $\LLL$.  This is a simple consequence of Theorem \ref{t.lisbest} and the relation $\LLL^L\sbs \LLL$, however: if a graph $G$ is in $\LLL^L$ then it is in $\LLL$ as well, so applying Theorem \ref{t.lisbest} gives an assignment $\{x_v\}$ with which Theorem \ref{t.lopleft} applies.

We close by noting that Scott and Sokal\cite[cf.~Thm 3.1]{gas} pointed out that Shearer's characterization applies to $\LLL^L$ as well, and so  $\LLL^L=\LLL$ in fact holds.  Via this observation, Theorem \ref{t.lopleft} and the Lopsided Local Lemma of Erd\H{o}s and Spencer can actually be viewed as consequences of their non-lopsided versions.

\end{document}